\title[T-dual Hull-Strominger system]{T-dual solutions of the Hull-Strominger system on 
non-K\"ahler threefolds}
\author[M. Garcia-Fernandez]{Mario Garcia-Fernandez}
\address{Dep. Matem\'aticas, Universidad Aut\'onoma de Madrid, and Instituto de Ciencias Matem\'aticas (CSIC-UAM-UC3M-UCM), Cantoblanco, 28049 Madrid, Spain}
\email{mario.garcia@icmat.es}
\theoremstyle{plain}
\newtheorem{theorem}{Theorem}[section]
\newtheorem{lemma}[theorem]{Lemma}
\theoremstyle{definition}
\newtheorem{definition}[theorem]{Definition}
\newtheorem{definition-theorem}[theorem]{Definition-Theorem}
\newtheorem{remark}[theorem]{Remark}
\numberwithin{equation}{section} \setcounter{tocdepth}{1}
\DeclareMathOperator{\tr}{tr}
\def\dbar{\bar\partial}
\newcommand{\CC}{{\mathbb C}}
\newcommand{\RR}{{\mathbb R}}
\newcommand{\ZZ}{{\mathbb Z}}
\begin{document}

\begin{abstract}
We construct new examples of solutions of the Hull-Strominger system on non-K\"ahler torus bundles over K3 surfaces, with the property that the connection $\nabla$ on the tangent bundle is Hermite-Yang-Mills. With this ansatz for the connection $\nabla$, we show that the existence of solutions reduces to known results about moduli spaces of slope-stable sheaves on a K3 surface, combined with elementary analytical methods. We apply our construction to find the first examples of T-dual solutions of the Hull-Strominger system on compact non-K\"ahler manifolds with different topology.
\end{abstract}

\maketitle


\section{Introduction}

Let $X$ be a compact complex manifold of dimension $n$ endowed with a holomorphic volume form $\Omega$. Let $\mathbb{V}$ be a smooth hermitian vector bundle over $X$. Let $\alpha$ be a real constant. The Hull-Strominger system, for a hermitian metric $\omega$ on $X$ and a unitary connection $A$ on $\mathbb{V}$, is given by
\begin{equation}\label{eq:HS}
\begin{split}
F_A^{0,2} = 0, \qquad F_A \wedge \omega^{n-1} & = 0,\\
d(\|\Omega\|_\omega \omega^{n-1}) & = 0,\\
dd^c \omega - \frac{\alpha}{4}(\tr R_\nabla \wedge R_\nabla - \tr F_A \wedge F_A) & = 0. 
\end{split}
\end{equation} 
The first line in \eqref{eq:HS} describes the Hermite-Yang-Mills condition for the connection $A$, while the second implies that $\omega$ is conformally balanced. In the last equation, known as the Bianchi identity, there is an ambiguity in the choice of a metric connection $\nabla$ in the tangent bundle of the manifold, back to its 
origins in heterotic string theory \cite{HullTurin,Strom}. Mathematically, \eqref{eq:HS} can be regarded as a family of systems of partial differential equations, often called \emph{Hull-Strominger systems}, parametrized by the different \emph{ans\"atze} for the connection $\nabla$. Unless otherwise stated, in this paper we will consider only the case $n=3$.

The first solutions of the Hull-Strominger system on compact non-K\"ahler manifolds were found in a remarkable work by Fu and Yau \cite{FuYau2,FuYau}, decades after the appearence of these equations in the string theory literature \cite{HullTurin,Strom}. Taking $\nabla$ as the Chern connection of $\omega$ and a suitable ansatz for the metric, Fu and Yau reduced the system to a complex Monge-Amp\`ere type equation for a scalar function on a K3 surface, and solved it by means of hard analytical techniques (see also \cite{Phong4,PPZ2}). Since then, and starting from the seminal work by Li and Yau \cite{LiYau}, the subsequent studies of different analytical and geometrical aspects of the Hull-Strominger system have provided an important impulse to complex non-K\"ahler geometry. 
At the present time, the existence and uniqueness problem 
seems to be far distant, and some of the tools that may lead one day to a general solution have just started being developed \cite{grt2,Phong}. It seems therefore necessary to obtain larger classes of examples where we can test these interesting proposals.

The purpose of this paper is twofold. Our first goal is to construct new solutions of the Hull-Strominger system on the compact non-K\"ahler threefolds originally considered by Fu and Yau. These old geometries are torus fibrations over K3 surfaces, endowed with conformally balanced hermitian metrics constructed by Goldstein and Prokushkin \cite{GoPro}. 
With the natural ansatz that $\nabla$ is Hermite-Yang-Mills \cite{BeRoo,FIVU,Hull2,Ivan09} (or, equivalently, an SU$(3)$-instanton)
\begin{equation}\label{eq:HYMintro}
R_\nabla^{0,2} = 0, \qquad R_\nabla \wedge \omega^{2} = 0,
\end{equation}
we show that the existence of solutions reduces to solve the Laplace equation on a K3 surface $S$, combined with known results about moduli spaces of slope-stable sheaves on $S$. In this way, in Theorem \ref{thm1} we find a new class of simple solutions of the Hull-Strominger which, as we will see, have some remarkable properties.

Our second goal has to do with the qualitative properties of these new solutions, in relation to a fascinating proposal about mirror symmetry for the Hull-Strominger system by Yau \cite{Yau2005}. This new incarnation of mirror symmetry, in close relation with the \emph{(0,2)-mirror symmetry} in the physics literature \cite{MelPle}, should have very different geometric features than the more familiar mirror symmetry which exchanges the A and B models of a Calabi-Yau manifold $X$ \cite{WittenMS}. A novel ingredient is the presence of a holomorphic vector bundle $V$ over $X$ with
$$
ch_2(X) = ch_2(V).
$$
Another interesting feature is that \emph{$(0,2)$-mirrors} can arise in tuples \cite{GGG}, rather than in pairs. According to \cite{ABS}, one way of constructing a \emph{$(0,2)$-mirror} for a pair $(X,V)$ is to implement \emph{T-duality} \cite{Buscher1,RoVer} along a U$(1)$-isometry direction on $X$. Of course, if $X$ is K\"ahler Ricci-flat, compact, and simply connected, there are no continuous isometries, and we are left with the case of tori. For solutions of the Hull-Strominger in a non-K\"ahler complex threefold such U$(1)$-isometries may nevertheless exist. This is indeed the case for our solutions in Theorem \ref{thm1}.

Motivated by this, our second goal is to find the first examples of T-dual solutions of the Hull-Strominger system on compact non-K\"ahler manifolds. 
To achieve our goal, we rely on a general result in previous work by the author \cite[Theorem 7.6]{GF3}, where it was proved 
that the solutions of the Hull-Strominger system 
with the Hermite-Yang-Mills ansatz \eqref{eq:HYMintro} for the connection $\nabla$ are exchanged by \emph{heterotic T-duality}. This notion of T-duality adapted to the Hull-Strominger system was introduced by Baraglia and Hekmati \cite{BarHek}, building on previous work by Bouwknegt, Evslin, and Mathai \cite{BEM}  and Cavalcanti and Gualtieri \cite{CaGu}, and also on the original physical references \cite{BeJO,MaSc,Narain}. In Theorem \ref{th:TdualtityHSGP} we prove that the solutions in Goldstein-Prokushkin threefolds constructed in Theorem \ref{thm1} are exchanged by T-duality, whereby changing the fundamental group of the complex manifold. We speculate that our T-dual solutions correspond to \emph{$(0,2)$-mirrors} (see Section \ref{sec:Tdual}).

The difficulties in finding these first examples, which may explain the time delay between the present paper and \cite{GF3}, have to do with the fact that our methods only apply to solutions with the Hermite-Yang-Mills ansatz \eqref{eq:HYMintro} for the connection $\nabla$. In particular, we have not been able to apply \cite[Theorem 7.6]{GF3} to the original solutions found by Fu and Yau \cite{FuYau}, which take $\nabla$ in \eqref{eq:HS} to be the Chern connection (see Remark \ref{rem:FuYau}). This further motivates our existence result Theorem \ref{thm1}. To the knowledge of the author, the only examples of solutions of the Hull-Strominger system with the Hermite-Yang-Mills ansatz are contained in \cite{AGF1,AGF2,FeiYau,FIVU,OUVi,PPZ1}. The biggest pool of solutions to the present day is provided by the ansatz for $\nabla$ given by the Chern connection
\cite{FHZ,FeiYau,FIVU,FTY,OUVi,Phong2,Phong3,Phong4,PPZ1,PPZ2}, which includes the first solutions found by Fu, Li, Tseng, and Yau. Solutions with the Bismut connection, the Hull connection, and other choices for the connection $\nabla$ were found in \cite{CCDLMZ,Fei0,Fei1,FeiYau,FIUVa,FIVU,OUVi,UVi}.

The approach to the Hull-Strominger system in the mathematics literature, given by finding exact solutions to the PDE \eqref{eq:HS}, is often in tension with the physics approach to this question \cite{MelMiSe}, which considers perturbative solutions given by a formal expansion in the parameter $\alpha$. Somewhat miraculously, the Hermite-Yang-Mills ansatz for the connection $\nabla$ has some special properties which reconcile these two points of view and bring them closer \cite{MaSp}. Mathematically, this particular ansatz introduces a symmetry between the connections $\nabla$ and $A$ which allows us to use Courant algebroid techniques \cite{GF,GF3,grt2} or to reduce \eqref{eq:HS} to a simpler PDE. On the physics side, this symmetry has been used for a long time to successfully deal with perturbative solutions \cite{BeRoo,OssaSvanes}.

T-duality for perturbative solutions of \eqref{eq:HS} on torus bundles over K3 surfaces has been studied in the string theory literature in \cite{EsvlinMina,Israel}. 
These solutions were predicted in \cite{DRS} via a chain of dualities, known as heterotic/F-theory duality, for which solid mathematical underpinnings have not yet been provided (see \cite{FMW,HeLiYau} for some interesting progress in this direction). 
The first rigorous construction of the solutions is due to Fu and Yau in their seminal work \cite{FuYau}. Similarly, our main result Theorem \ref{th:TdualtityHSGP} seems to provide the first rigorous examples of T-dual solutions of the Hull-Strominger system on compact non-K\"ahler manifolds with different topology. It is interesting to observe that the solutions constructed in Theorem \ref{thm1} are formally very similar to the perturbative solutions obtained by Becker and Sethi \cite{BeSe} following \cite{DRS}. We speculate that this resemblance may not be causal, and could possibly help us to sheed new light on the mathematical aspects of the heterotic/F-theory duality principle.

\vspace{12pt}
\noindent
\textbf{Acknowledgments:} I wish to thank X. De la Ossa, T. Fei, P. Hekmati, J. McOrist, R. Minasian, D. Phong, E. Svanes, C. Tipler and L. Ugarte for helpful discussions and comments about the manuscript. Theorem \ref{thm1} answers a question posed to the author by Minasian during the `String Theory, Geometry and String Model Building' at the Mainz Institute for Theoretical Physics (MITP). I thank the organizers and the participants of this activity for the stimulating conversations, and the MITP for its hospitality and support.

\section{New solutions on Goldstein-Prokushkin threefolds}\label{sec1}

Let $S$ be a K3 surface endowed with a K\"ahler Ricci-flat metric $\omega_S$ and a holomorphic volume form $\Omega_S$. We do not assume that $S$ is algebraic. Let $\omega_1$ and $\omega_2$ be anti-self-dual $(1,1)$-forms on $S$ such that 
$$
[\omega_i/2\pi] \in H^2(S,\ZZ).
$$
Let $X$ be the total space of the fibred product of the principal $U(1)$-bundles determined by $[\omega_1/2\pi]$ and $[\omega_2/2\pi]$. Then, $X$ is a principal $T^2$-bundle over $S$
\begin{equation}\label{eq:T2fibration}
p \colon X \to S
\end{equation}
and we can choose a connection $\upsilon$ on $X$ with curvature 
$$
i F_\upsilon = (\omega_1,\omega_2) \in \Omega^2(S,\RR^2).
$$
Identifying $\RR^2 \cong \CC$ we construct a $T^2$-invariant complex one-form $\sigma$ on $X$ such that
\begin{equation}\label{eq:sigma}
d\sigma = \omega_1 + i \omega_2.
\end{equation}
Consider $X$ endowed with the almost complex structure determined by the complex $3$-form
$$
\Omega = p^*\Omega_S \wedge \sigma.
$$
This complex structure is integrable, that is, $d\Omega = 0$, and 
the corresponding complex threefold $X$ is non-K\"ahler unless $\omega_1 = \omega_2 = 0$ (see \cite[Theorem 1]{GoPro}).

Let $u$ be a smooth function on $X$ and $t > 0$ a positive real constant. Then, the hermitian metric 
\begin{equation}\label{eq:GPansatz}
\omega_{t,u} = p^*(e^u \omega_S) + \frac{i t}{2} \sigma \wedge \overline{\sigma},
\end{equation}
satisfies the \emph{conformally balanced equation} \cite{GoPro} (see also \cite[Lemma 12]{FuYau})
$$
d(\|\Omega\|_{\omega_{t,u}} \omega_{t,u} \wedge \omega_{t,u}) = 0.
$$
Note that the constant $t$ in \eqref{eq:GPansatz} can be absorbed in the definition of $u$ by an homothety, recovering the original ansatz in \cite{GoPro}. Nonetheless, the size of the fibres of \eqref{eq:T2fibration} will play an important role in Section \ref{sec:Tdual}, so we have decided to make this dependence explicit.

Consider the smooth complex vector bundle $\mathbb{V} \to S$ given by
$$
\mathbb{V} = T^{1,0}X/T^2.
$$
We stress the fact that we do not consider any preferred holomorphic structure on $\mathbb{V}$. One has $p^* \mathbb{V} = T^{1,0}X$ and (see \cite[Proposition 8]{FuYau})
$$
c_1(\mathbb{V}) = 0, \qquad c_2(\mathbb{V}) = 24 = c_2(S).
$$
Our first basic observation is contained in the following result.

\begin{lemma}\label{lemma:HYMV}
Let us fix an arbitrary hermitian metric on $\mathbb{V}$. Then, the smooth complex vector bundle $\mathbb{V}$ admits a unitary Hermite-Yang-Mills connection with respect to $\omega_S$.
\end{lemma}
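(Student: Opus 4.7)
The plan is to transfer a Hermite-Yang-Mills connection from a concrete holomorphic model across a smooth bundle isomorphism. The natural candidate model is the rank-three holomorphic bundle
\[
\mathcal{W} = TS \oplus \mathcal{O}_S,
\]
whose Chern classes $c_1(\mathcal{W}) = 0$ and $c_2(\mathcal{W}) = c_2(TS) = 24$ agree with those of $\mathbb{V}$ recorded just above the statement.

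The first step is to exhibit a HYM connection on $\mathcal{W}$ with respect to $\omega_S$. The Chern connection of the Ricci-flat K\"ahler metric on $TS$ is holomorphic by construction, and the identity $i\Lambda_{\omega_S}F_{TS} = \mathrm{Ric}(\omega_S) = 0$ gives $F_{TS} \wedge \omega_S = 0$; combined with the flat trivial connection on $\mathcal{O}_S$ this yields a unitary HYM connection $A_\mathcal{W}$ on $\mathcal{W}$ for the natural direct-sum hermitian metric $h_\mathcal{W}$. The second step is to identify $\mathbb{V}$ smoothly with $\mathcal{W}$: because $S$ is simply connected, standard obstruction theory classifies smooth complex vector bundles of rank $r \geq 2$ on the closed 4-manifold $S$ by their Chern classes in $H^2(S,\ZZ) \oplus H^4(S,\ZZ)$, so there is a smooth bundle isomorphism $\phi\colon \mathbb{V} \to \mathcal{W}$, and $\phi^*A_\mathcal{W}$ is a HYM connection on $\mathbb{V}$, unitary with respect to $\phi^* h_\mathcal{W}$.

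To match the arbitrarily prescribed hermitian metric $h$ on $\mathbb{V}$, one uses that the space of hermitian metrics on a fixed smooth complex vector bundle is contractible: any two differ by a smooth positive self-adjoint automorphism, so there exists a smooth gauge transformation $g$ of $\mathbb{V}$ with $g^*h = \phi^*h_\mathcal{W}$. Conjugating $\phi^*A_\mathcal{W}$ by $g$ produces a unitary HYM connection with respect to $h$, thanks to gauge-covariance of the HYM equations. The main obstacle I anticipate is the smooth classification step, which invokes the standard but non-trivial topological fact that $(c_1,c_2)$ determines a rank-$r$ complex vector bundle over a simply-connected closed 4-manifold for $r \geq 2$; once that is granted, the rest of the argument is essentially formal.
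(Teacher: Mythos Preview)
Your argument is correct and takes a genuinely different route from the paper's. The paper proves the lemma by appealing to the Donaldson--Uhlenbeck--Yau theorem together with an abstract non-emptiness result for moduli of slope-stable sheaves on a (possibly non-projective) K3 surface: computing the Mukai vector $v(\mathbb{V}) = (3,0,-21)$ and checking $(v,v) = 126 \geq 0$ guarantees a stable sheaf with the right invariants, hence an irreducible HYM connection. You instead exhibit an explicit model $TS \oplus \mathcal{O}_S$, equip it with the direct sum of the Chern connection of the Ricci-flat metric and the trivial connection, and then transfer across a smooth isomorphism provided by the $(c_1,c_2)$-classification of complex bundles on the simply-connected $4$-manifold $S$. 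Your approach is more elementary and self-contained --- it uses only the Ricci-flatness already in the hypotheses and a standard topological fact, avoiding moduli theory entirely --- but it is specific to the particular Chern classes of $\mathbb{V}$ and yields only a \emph{reducible} (polystable) HYM connection, whereas the paper's method gives an irreducible one and extends uniformly to the companion Lemma for $\mathbb{W}$, whose Chern classes differ. For the purposes of the theorem that follows, the reducibility is harmless, and your final gauge-transformation step to match the prescribed hermitian metric is the same device the paper itself uses later.
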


\begin{proof}
By the Donaldson-Uhlenbeck-Yau Theorem \cite{Don,UY} we need to show that the moduli space of slope-stable sheaves on the (possibly non-projective) K3 surface $S$ with Mukai vector 
$$
v(\mathbb{V}) = (3,0,-21)
$$ 
is non-empty. A sufficient condition for the moduli space with Mukai vector $v = (v_0,v_1,v_2)$ to be non-empty is that the Mukai pairing satisfies (see \cite{PeregoToma}) 
$$
(v,v) = v_1^2 - 2v_0v_2 \geq 0.
$$
Since $(v(\mathbb{V}),v(\mathbb{V})) = 126$, the statement holds.
\end{proof}

Consider the bilinear form defined by Poincar\'e duality
$$
Q \colon H^2(S,\ZZ) \times H^2(S,\ZZ) \to \ZZ
$$
and denote $Q(\beta,\beta) = Q(\beta)$, for $\beta \in H^2(S,\ZZ)$. Let $\alpha \in \mathbb{R} \backslash \{0\}$ such that
\begin{equation}\label{eq:integral}
\frac{t}{\alpha}(Q([\omega_1/2\pi]) + Q([\omega_2/2\pi])) \in \mathbb{Z}.
\end{equation}

\begin{remark}\label{rem1}
Notice that, for $j= 1,2$, $Q([\omega_j/2\pi]) = - 2k_j$ for some $0 < k_j \in \ZZ$ (see \cite{FuYau}), and hence we can choose e.g. $\alpha = \pm 2t$. 
\end{remark}

Let $\mathbb{W}$ be a smooth complex vector bundle over $S$ with rank $r$, $c_1(\mathbb{W}) = 0$ and second Chern class
\begin{equation}\label{eq:c2Wbis}
c_2(\mathbb{W}) =  24 + \frac{t}{\alpha}(Q([\omega_1/2\pi]) + Q([\omega_2/2\pi])).
\end{equation}
Definition \eqref{eq:c2Wbis} implies the more familiar `anomaly cancellation' condition for the pull-back bundle $p^*\mathbb{W}$ on the cohomology of $X$
$$
c_2(p^*\mathbb{W}) - c_2(X) = 0 \in H^4(X,\RR).
$$
For the case that the $T^2$-bundle $X$ is non-trivial, this follows from the isomorphism $H^2(X,\RR) \cong H^2(S,\RR)/\langle [\omega_1,\omega_2] \rangle$ (see \cite{GoPro}).

\begin{lemma}\label{lemma:HYMW}
Let us fix an arbitrary hermitian metric on $\mathbb{W}$. Then, the smooth complex vector bundle $\mathbb{W}$ admits a unitary Hermite-Yang-Mills connection with respect to $\omega_S$, provided that
\begin{equation}\label{eq:ineqslope}
r \leqslant  c_2(\mathbb{W}).
\end{equation}
\end{lemma}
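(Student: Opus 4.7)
The plan is to follow the strategy of Lemma \ref{lemma:HYMV} verbatim, invoking the Donaldson-Uhlenbeck-Yau theorem to reduce the existence of a unitary Hermite-Yang-Mills connection on $\mathbb{W}$ to the non-emptiness of the moduli space of slope-stable sheaves on the K3 surface $S$ with the same topological invariants as $\mathbb{W}$. The first step is thus to translate the Chern class data into a Mukai vector. Since $c_1(\mathbb{W}) = 0$ and on a K3 surface $\sqrt{\mathrm{td}(S)} = (1,0,1)$, one has
$$
v(\mathbb{W}) = \mathrm{ch}(\mathbb{W}) \cdot \sqrt{\mathrm{td}(S)} = \bigl(r,\, 0,\, r - c_2(\mathbb{W})\bigr),
$$
with $c_2(\mathbb{W})$ given by \eqref{eq:c2Wbis}.

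The second step is to evaluate the Mukai self-pairing, using the formula $(v,v) = v_1^2 - 2 v_0 v_2$ already recalled in the proof of Lemma \ref{lemma:HYMV}:
$$
(v(\mathbb{W}), v(\mathbb{W})) = 0 - 2r(r - c_2(\mathbb{W})) = 2r(c_2(\mathbb{W}) - r).
$$
The hypothesis \eqref{eq:ineqslope} is precisely the condition that this pairing be non-negative. Thus the Perego-Toma non-emptiness criterion cited in Lemma \ref{lemma:HYMV} applies and produces a slope-stable sheaf on $S$ with Mukai vector $v(\mathbb{W})$; DUY then converts this into a unitary Hermite-Yang-Mills connection with respect to $\omega_S$.

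The only delicate issue, which is the same one already present in Lemma \ref{lemma:HYMV}, is ensuring that the stable sheaf thus produced is locally free, so that it defines a holomorphic structure on the fixed smooth bundle $\mathbb{W}$ rather than on a proper modification of it. Since the expected dimension of the Mukai moduli is $(v,v) + 2 = 2r(c_2(\mathbb{W})-r) + 2 \geq 2$ under \eqref{eq:ineqslope}, and the locally free locus is Zariski open in the moduli space on a smooth surface, a generic point gives a slope-stable vector bundle of the required smooth type. This is the point I would expect to require the most bookkeeping, but no new idea beyond the one implicit in the previous lemma.
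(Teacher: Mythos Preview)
Your proof is correct and follows exactly the same approach as the paper: compute the Mukai vector $v(\mathbb{W}) = (r,0,r-c_2(\mathbb{W}))$, evaluate the Mukai pairing $(v(\mathbb{W}),v(\mathbb{W})) = -2r(r-c_2(\mathbb{W}))$, observe that \eqref{eq:ineqslope} is equivalent to its non-negativity, and invoke Perego--Toma together with Donaldson--Uhlenbeck--Yau as in Lemma~\ref{lemma:HYMV}. Your additional paragraph on local freeness makes explicit a point the paper leaves implicit in both lemmas, but it is not a departure from the paper's argument.
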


\begin{proof}
We note that the Mukai vector of $\mathbb{W}$ is given by
$$
v(\mathbb{W}) = (r,0,r-c_2(\mathbb{W})).
$$ 
Arguing as in Lemma \ref{lemma:HYMV}, to prove the statement it suffices that
$$
(v(\mathbb{W}),v(\mathbb{W})) = -2r (r - c_2(\mathbb{W})) \geqslant 0 
$$
which is equivalent to \eqref{eq:ineqslope}.
\end{proof}

\begin{remark}\label{rem2}
If $\alpha/t \in \{2,4\}$ and $Q([\omega_1/2\pi]) = Q([\omega_2/2\pi]) = -2$, then \eqref{eq:ineqslope} is satisfied for any $r \leqslant 22$.
\end{remark}

Assuming that \eqref{eq:ineqslope} is satisfied, by Lemma \ref{lemma:HYMV} and Lemma \ref{lemma:HYMW} we can choose $\nabla$ and $A$ unitary Hermite-Yang-Mills connections with respect to $\omega_S$ on, respectively, $\mathbb{V}$ and $\mathbb{W}$. Consider the pull-back bundles $p^*\mathbb{V} = T^{1,0}X$ and $p^*\mathbb{W}$ over $X$, endowed with the pull-back connections $p^*\nabla$ and $p^*A$. A straightforward calculation shows that $p^*\nabla$ and $p^*A$ are Hermite-Yang-Mills connections with respect $\omega_{t,u}$ in \eqref{eq:GPansatz} (see \cite[Lemma 16]{FuYau}) for any choice of $t$ and $u$. Note that, at this point, the connection $p^*\nabla$ is unitary with respect to a fixed reference $T^2$-invariant hermitian metric on $T^{1,0}X$. We will deal with this issue in the proof of Theorem \ref{thm1}.

With the ansatz $(\omega_{t,u},p^*\nabla,p^*A)$ for our solutions, the Hull-Strominger system \eqref{eq:HS} reduces to the condition
\begin{equation}\label{eq:BI}
2i \partial \dbar (\omega_{t,u}) = \frac{\alpha}{4} p^*(\tr R_\nabla \wedge R_\nabla - \tr F_A \wedge F_A).
\end{equation}
Applying \cite[Lemma 13]{FuYau} we have
$$
2i \partial \dbar (\omega_{t,u}) = \frac{1}{4}p^*\Big{(}(- 2\Delta(e^u) + t\|\omega_1\|^2_{\omega_S} + t\|\omega_1\|_{\omega_S}^2)\omega_S^2\Big{)}
$$
where $\Delta$ denotes the Laplacian of $\omega_S$, with the convention $\Delta f \omega_S^2 = 4i \dbar \partial f \wedge \omega$. Therefore, \eqref{eq:BI} is equivalent to the following Laplace equation on $S$:
\begin{equation}\label{eq:Laplace}
- 2\Delta(e^u) \omega_S^2 = \alpha \tr R_\nabla \wedge R_\nabla - \alpha \tr F_A \wedge F_A - t \|\omega_1\|^2_{\omega_S} \omega_S^2 - t \|\omega_1\|_{\omega_S}^2 \omega_S^2.
\end{equation}
Integrating the right hand-side of the equation and using that
$$
Q([\omega_j/2\pi]) = - \frac{1}{8\pi^2}\int_S \|\omega_1\|^2_{\omega_S} \omega_S^2,
$$
we obtain that the sufficient and necessary condition to solve \eqref{eq:BI} is \eqref{eq:c2Wbis}.


\begin{theorem}\label{thm1}
Let $S$ be a K3 surface endowed with a K\"ahler Ricci-flat metric $\omega_S$. Let $\omega_1$ and $\omega_2$ be anti-self-dual $(1,1)$-forms on $S$ such that $[\omega_i/2\pi] \in H^2(S,\ZZ)$, and consider the corresponding principal $T^2$-bundle $p \colon X \to S$. Let $\alpha,t \in \RR$ with $t > 0$ satisfying \eqref{eq:integral}, and let $\mathbb{W}$ be a smooth complex hermitian vector bundle with rank $r$, $c_1(\mathbb{W}) = 0$, second Chern class \eqref{eq:c2Wbis}, and satisfying 
$$
r \leqslant c_2(\mathbb{W}).
$$
Then there exists a smooth function $u$ on $S$ and Hermite-Yang-Mills unitary connections $p^*\nabla$ on $T^{1,0}X = p^*\mathbb{V}$ and $p^*A$ on $p^*\mathbb{W}$ such that $(\omega_{t,u},p^*\nabla,p^*A)$ is a solution of the Hull-Strominger system \eqref{eq:HS}. Furthermore, we can assume that the connection on the tangent bundle is $\omega_{t,u}$-unitary.
\end{theorem}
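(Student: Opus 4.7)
The plan is to assemble the pieces already set up in the preceding discussion: use the two Hermite-Yang-Mills Lemmas to produce connections, reduce \eqref{eq:HS} to the single Laplace equation \eqref{eq:Laplace} on $S$, verify its solvability via a Chern-Weil integration, and finally repair the unitarity of the connection on $T^{1,0}X$ by a gauge transformation.

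Concretely, I would apply Lemma \ref{lemma:HYMV} and Lemma \ref{lemma:HYMW} (the latter via the hypothesis $r\leqslant c_2(\mathbb{W})$) to obtain unitary $\omega_S$-Hermite-Yang-Mills connections $\nabla$ on $\mathbb{V}$ and $A$ on $\mathbb{W}$, with respect to fixed reference hermitian metrics $h_\mathbb{V}$ and $h_\mathbb{W}$. The discussion preceding the theorem shows that the pull-backs $p^*\nabla$ and $p^*A$ are $\omega_{t,u}$-Hermite-Yang-Mills on $T^{1,0}X$ and $p^*\mathbb{W}$ for every choice of smooth $u$ and $t>0$; combined with Goldstein-Prokushkin balancedness, this reduces \eqref{eq:HS} to the scalar Laplace equation \eqref{eq:Laplace} on $(S,\omega_S)$. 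Since $(S,\omega_S)$ is compact K\"ahler, \eqref{eq:Laplace} admits a smooth solution if and only if the right-hand side integrates to zero. A Chern-Weil computation using $c_1(\mathbb{V})=c_1(\mathbb{W})=0$, $c_2(\mathbb{V})=24$, and the anti-self-duality identity $Q([\omega_j/2\pi])=-\tfrac{1}{8\pi^2}\int_S\|\omega_j\|^2_{\omega_S}\omega_S^2$ reduces this vanishing condition to the anomaly constraint \eqref{eq:c2Wbis}, which is our hypothesis. Calling $f$ a smooth solution (unique up to constant), I would add a sufficiently large positive constant so that $f>0$ and set $u:=\log f$; then $(\omega_{t,u},p^*\nabla,p^*A)$ solves \eqref{eq:HS}.

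For the final clause, observe that $p^*h_\mathbb{V}$ and the metric $h_{t,u}$ induced on $T^{1,0}X$ by $\omega_{t,u}$ are two smooth hermitian metrics on the same smooth complex vector bundle, so there exists a smooth bundle isomorphism $g\colon(T^{1,0}X,h_{t,u})\to(T^{1,0}X,p^*h_\mathbb{V})$ which is a fibrewise isometry. The conjugated connection $\nabla':=g^{-1}\circ p^*\nabla\circ g$ is then $h_{t,u}$-unitary; its curvature equals $g^{-1}R_{p^*\nabla}g$, so the conditions $R_{\nabla'}^{0,2}=0$ and $R_{\nabla'}\wedge\omega_{t,u}^{2}=0$ are preserved, while $\tr R_{\nabla'}\wedge R_{\nabla'}=\tr R_{p^*\nabla}\wedge R_{p^*\nabla}$ by conjugation-invariance of trace, so the Bianchi identity \eqref{eq:BI} continues to hold for the same $u$. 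The most delicate point I anticipate is the Chern-Weil bookkeeping matching the integrability condition of \eqref{eq:Laplace} with the precise form of \eqref{eq:c2Wbis}: the sign conventions for $\tr F\wedge F$ versus $c_2$ must combine correctly with the anti-self-duality factor so as to align exactly with the normalization chosen in \eqref{eq:c2Wbis}.
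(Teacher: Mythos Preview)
Your proposal is correct and follows essentially the same route as the paper: invoke Lemma~\ref{lemma:HYMV} and Lemma~\ref{lemma:HYMW}, cite the reduction to the Laplace equation \eqref{eq:Laplace} and its solvability criterion \eqref{eq:c2Wbis} already established in the discussion, and then fix unitarity with a complex gauge transformation. You have simply unpacked more of the details (the positivity adjustment $u=\log f$, and the explicit verification that the gauge-transformed connection still satisfies the Hermite--Yang--Mills and Bianchi conditions via conjugation-invariance of the trace), which the paper leaves implicit.
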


\begin{proof}
The proof follows from Lemma \ref{lemma:HYMV}, Lemma \ref{lemma:HYMW}, and the previous construction. For the last part of the statement, note that if $p^*\nabla$ is unitary with respect to a fixed hermitian metric $h_0$ on $T^{1,0}X$, we can choose a complex gauge transformation $g$ taking $h_0$ to the hermitian metric given by $\omega_{t,u}$, and hence $(\omega_{t,u},g \cdot p^*\nabla,A)$ is also a solution which satisfies the desired property.
\end{proof}

\begin{remark}\label{rem:FuYau0}
Crucially, the previous result uses the Hermite-Yang-Mills ansatz for the connection $\nabla$ in order to reduce \eqref{eq:BI} to the simple Laplace equation \eqref{eq:Laplace} (cf. Remark \ref{rem:FuYau}).
\end{remark}

Concrete families of examples which fulfill the hypothesis of Theorem \ref{thm1} can be easily constructed from Remark \ref{rem1} and Remark \ref{rem2}. The previous theorem applies also when $\omega_1 = 0 = \omega_2$, providing in this case non-K\"ahler solutions of the Hull-Strominger system on the product $S \times T^2$. It is interesting to observe that these solutions cannot be obtained from the deformation argument in \cite{AGF1}, since the holonomy of any Calabi-Yau metric reduces to $\operatorname{SU}(2)$.

\section{Examples of T-dual solutions}\label{sec:Tdual}

In this section we study the behaviour of the solutions constructed in Theorem \ref{thm1} under T-duality, providing first examples of T-dual solutions of the Hull-Strominger system on compact non-K\"ahler manifolds.

We start by recalling the notion T-duality relevant for the Hull-Strominger system from \cite{BarHek}. Let $G$ be a compact semisimple Lie group with Lie algebra $\mathfrak{g}$. We fix a symmetric non-degenerate invariant bilinear form
$$
\langle \cdot , \cdot \rangle \in S^2(\mathfrak{g}^*)
$$ 
and consider the corresponding biinvariant Cartan three-form on $G$
$$
\sigma^3 = - \frac{1}{6}\langle \omega , [\omega,\omega] \rangle,
$$
where $\omega$ denotes the $\mathfrak{g}$-valued Maurer-Cartan one-form on the group. 
Let $M$ be a smooth manifold. Let $P$ be a smooth principal $G$-bundle over $M$.

\begin{definition}[\cite{Redden}]\label{def:stringclass}
A \emph{real string class} on $P$ is a class $\tau \in H^3(P,\RR)$ which restricts to $[\sigma^3] \in H^3(G,\RR)$ on the fibres of $P$. 
\end{definition}

Real string classes form a torsor over $H^3(M,\RR)$, that we denote by $H^3_{str}(P,\RR)$, where the action is defined by pullback and addition \cite[Prop. 2.16]{Redden}
$$
\tau \to \tau + \pi^*[H].
$$
Here $\pi \colon P \to M$ is the canonical projection on the principal bundle $P$ and $[H] \in H^3(M,\RR)$. Given a real string class and a connection $\theta$ on $P$, we can always find a representative of the class which is of the form
\begin{equation}\label{eq:hatH}
\hat H = \pi^* H + CS(\theta),
\end{equation}
where $CS(\theta)$ denotes the Chern-Simons three-form
\begin{equation*}\label{eq:CS}
CS(\theta) = -  \frac{1}{6}\langle \theta,[\theta,\theta]\rangle + \langle F_\theta \wedge \theta\rangle \in \Omega^3(P).
\end{equation*}
Consequently, real string classes are $G$-invariant classes on $P$. As it can be inferred from $dCS(\theta) = \langle F_\theta \wedge F_\theta\rangle$, the existence of real string classes implies the vanishing of the first Pontryagin class
$$
p_1(P) = 0 \in H^4(M,\RR).
$$
Consider now the situation where $M$ is the total space of a principal torus bundle over a base manifold $B$, with fibre $T^k = \RR^k/\ZZ^k$, and $P$ is given by pull-back of a principal $G$-bundle $P_0$ over $B$ 
\begin{equation}\label{eq:PMB}
  \xymatrix{
 P \ar[d]_\pi \ar[r] &  P_0 \ar[d] \\
 M \ar[r]_{p} & B \\
  }
\end{equation}
Then, $P$ has a natural structure of $T^k \times G$-principal bundle, and we can consider $T^k \times G$-invariant string classes on $P$.

\begin{definition}[\cite{BarHek}]\label{def:Tdual}
Let $(M,P,\tau)$ be a triple where $(M,P)$ are as in \eqref{eq:PMB} and $\tau$ is a $T^k \times G$-invariant string classes on $P$. We say that $(M,P,\tau)$ is \emph{T-dual} to another triple $(M',P',\tau')$ if there exists a commutative diagram
\begin{equation*}
  \xymatrix{
 & \ar[ld]^{q} P \times_{P_0} P' \ar[d] \ar[rd]_{q'} & \\
 P \ar[d]_\pi \ar[r] &  P_0 \ar[d] & \ar[l] \ar[d]^{\pi'} P' \\
 M \ar[r]_p & B & M' \ar[l]^{p'} \\
  }
\end{equation*}
and representatives $\hat H$ and $\hat H'$ of the form \eqref{eq:hatH} of the string classes $\tau$ and $\tau'$, respectively, such that
\begin{equation}\label{eq:dF}
dF = q^* \hat H - q'^*\hat H',
\end{equation}
where $F \in \Omega^2(P \times_{P_0} P')$ is a $T^k \times T^{k'}$-invariant two-form on $P \times_{P_0} P'$ inducing a non-degenerate pairing
$$
F \colon \operatorname{Ker} dq \otimes \operatorname{Ker} dq' \to \RR.
$$
\end{definition}

\begin{remark}
Given $(M,P,\tau)$ as in Definition \ref{def:Tdual}, the existence of a T-dual triple implies suitable integrality for the class $\tau$ \cite{BarHek}. For this, one assumes that $\langle \cdot , \cdot \rangle$ is normalized, so that the cohomology class of $\sigma^3$ lifts to an integral cohomology class
$$
[\sigma^3] \in H^3(G,\ZZ).
$$
The integrality of $\tau$ boils down to the existence of a lift to the integral cohomology $H^3(P,\ZZ)$ which restricts to $[\sigma^3] \in H^3(G,\ZZ)$ on the fibres of $P$. This corresponds to the \emph{flux quantization} condition which appears in the string theory literature (cf. \cite{BarHek,GoPro,Witten00}). For simplicity, in this work we will ignore this issue as the existence of a T-dual will follow directly from our ansatz.
\end{remark}

We move next to the relation between T-duality, in the sense of Definition \ref{def:Tdual}, and the existence of solutions of the Hull-Strominger system. Let $(M,P,\tau)$ as before. We assume that $M$ is $2n$-dimensional and spin. Consider the following abstract version of the equations \eqref{eq:HS}: a solution of the Hull-Strominger system on $(M,P)$ is given by a tuple $(\psi,\omega,\theta)$, given by a $SU(n)$-structure $(\psi,\omega)$ on $M$ and a connection $\theta$ on $P$, satisfying equations
\begin{equation}\label{eq:Stromingersystem}
\begin{split}
d\Omega & = 0,\\
F_\theta^{0,2} = 0, \quad F_\theta \wedge \omega^2 & = 0,\\
d(\|\Omega\|_\omega \omega^{n-1}) & = 0,\\
dd^c \omega - \langle F_\theta \wedge F_\theta \rangle & = 0,
\end{split}
\end{equation}
where $\Omega = \|\Omega\|_\omega \psi$. We say that a solution $(\psi,\omega,\theta)$ has string class $\tau$ if
$$
\tau = [- \pi^*d^c \omega + CS(\theta)] \in H^3_{str}(P,\RR).
$$
\begin{theorem}[\cite{GF3}]\label{th:TdualityHS}
Assume that $(M,P,\tau)$ admits a solution of the system \eqref{eq:Stromingersystem} with string class $\tau$. Then, if $(M',P',\tau')$ is a T-dual triple, it also admits a solution with string class $\tau'$.
\end{theorem}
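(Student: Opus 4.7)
The strategy I would pursue is to encode the Hull--Strominger solution as a Courant-algebroid-compatible geometric structure and use that T-duality induces an isomorphism of Courant algebroids. To a triple $(M,P,\tau)$ one associates a transitive Courant algebroid $E\to M$ whose exact quotient is twisted by a representative $\hat H$ of the string class $\tau$, and the four equations of \eqref{eq:Stromingersystem} together are equivalent to the existence of a natural generalized hermitian--pluriclosed structure on $E$. The point, going back to \cite{BarHek,CaGu}, is that the T-duality correspondence of Definition \ref{def:Tdual} together with the compatibility \eqref{eq:dF} realizes a Courant algebroid isomorphism $E \cong E'$ over the correspondence space $C = P \times_{P_0} P'$. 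Once this isomorphism is set up, the system \eqref{eq:Stromingersystem} transports tautologically to the dual side.

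Concretely, assuming (using the $T^k$-invariance of $\tau$) that the given solution $(\psi,\omega,\theta)$ is $T^k$-invariant, I would first pull everything back to $C$ via $q$. The connection $\theta$ decomposes as a base piece that descends to a connection on $P_0$ and hence has a common meaning on both $P$ and $P'$, together with a vertical $\mathfrak{g}$-valued one-form along the $T^k$-fibers of $M$; the latter is exchanged with its $M'$-counterpart by means of the non-degenerate pairing $F\colon \ker dq \otimes \ker dq' \to \RR$. A Buscher-type rule built from $F$ (inverting the vertical component of the fiber metric and adding off-diagonal contributions) produces the dual hermitian form $\omega'$, and a parallel construction determines $\psi'$ so that $\|\Omega'\|_{\omega'}$ and $\|\Omega\|_\omega$ correspond along the fiberwise Legendre-type transform. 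The HYM conditions $F_{\theta'}^{0,2}=0$ and $F_{\theta'}\wedge (\omega')^2=0$ then follow because the base curvature is unchanged, and the conformally balanced equation transports since $\|\Omega\|_\omega \omega^{n-1}$ is symmetric between the two sets of vertical volumes.

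The crux is the Bianchi identity together with the complex integrability $d\Omega' = 0$. Applying $d$ to \eqref{eq:dF} yields $q^* d\hat H = q'^* d\hat H'$, and since by \eqref{eq:hatH} we have $d\hat H = \pi^* dH + \langle F_\theta\wedge F_\theta\rangle$ (and analogously on the primed side), the underlying Pontryagin forms on $B$ agree. Writing the string class as $\tau = [-\pi^*d^c\omega + CS(\theta)]$, the Bianchi identity on $M$ then translates directly into the one on $M'$, and simultaneously identifies the string class of the dual solution as $\tau'$. I expect the main obstacle to be verifying that the transported almost complex structure on $M'$ is genuinely integrable: a direct coordinate check becomes unwieldy, and the clean route is to phrase $d\Omega' = 0$ as Courant involutivity of the $(0,1)$-eigenbundle of $E\otimes \CC$, which is manifestly preserved by the Courant isomorphism built from $F$. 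This is precisely where the Courant-algebroid reformulation proves essential, as it makes the argument visibly symmetric between the two sides of the T-duality.
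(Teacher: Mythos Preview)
The paper does not actually prove this theorem; it is quoted from \cite{GF3} with only a one-sentence description of method: the system \eqref{eq:Stromingersystem} is characterized in terms of \emph{spinorial equations} (Killing spinors) on a Courant algebroid, and then the Cavalcanti--Gualtieri T-duality isomorphism \cite{CaGu} transports these. Your overall architecture---encode the solution as Courant-algebroid data on $E$ and use the isomorphism built from $F$---matches this description.

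The substantive difference is in the characterization. You package \eqref{eq:Stromingersystem} as a ``generalized hermitian--pluriclosed structure'' together with Courant involutivity of the $(0,1)$-eigenbundle, whereas \cite{GF3} uses a Killing spinor on the Courant algebroid. The spinorial formulation bundles all four equations (integrability, Hermite--Yang--Mills, conformally balanced, Bianchi) into a single equation that is manifestly natural under Courant isomorphism, so the transport is genuinely tautological. Your route instead requires checking that each piece transports separately, which is what your second paragraph attempts.

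That second paragraph has a real gap. Your claim that the conformally balanced equation ``transports since $\|\Omega\|_\omega \omega^{n-1}$ is symmetric between the two sets of vertical volumes'' overlooks the \emph{dilaton shift}: under T-duality the norm $\|\Omega'\|_{\omega'}$ is not simply the transform of $\|\Omega\|_\omega$, but acquires a factor from the determinant of the vertical metric (see \cite[Proposition 6.8]{GF3}, and Remark \ref{rem:dilaton} of the present paper, which observes that this shift happens to be trivial in Theorem \ref{th:TdualtityHSGP} only because the fibre volume is constant along the base). In the general setting of Theorem \ref{th:TdualityHS} this shift must be tracked explicitly, and it is exactly what makes the conformally balanced equation close on the dual side. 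The spinorial approach of \cite{GF3} handles this automatically, since the dilaton enters the Killing spinor equation and transforms correctly under the Courant isomorphism; your ``symmetry'' argument does not.
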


The proof of the previous theorem is based on a characterization of the system \eqref{eq:Stromingersystem} in terms of spinorial equations on a Courant algebroid \cite{GF3}, combined with the salient implications of T-duality in generalized geometry \cite{CaGu}. It gives an explicit formula for the T-dual solution using the two-form $F$ in Definition \ref{def:Tdual}, that we will use in the proof of our main result Theorem \ref{th:TdualtityHSGP}. 

We are now ready to understand the behaviour of the solutions constructed in Theorem \ref{thm1} under T-duality. We need first to introduce some notation, in order to make explicit the dependence on the different parameters. Let $S$ be a K3 surface endowed with a K\"ahler Ricci-flat metric $\omega_S$. Let $\omega_1$ and $\omega_2$ be anti-self-dual $(1,1)$-forms on $S$ such that 
$$
\kappa_j : = [\omega_j/2\pi] \in H^2(S,\ZZ),
$$
for $j = 1,2$. We denote $\kappa = (\kappa_1,\kappa_2)$. Consider the associated principal $T^2$-bundle 
$$
p_{\kappa} \colon X_{\kappa} \to S.
$$
Let $\alpha,t \in \RR$ with $t > 0$ satisfying \eqref{eq:integral}, and let $\mathbb{W}$ be a smooth complex hermitian vector bundle as in Theorem \ref{thm1}. Note that $\mathbb{W}$ depends on the parameters $\alpha,t,\kappa$ only through its second Chern class \eqref{eq:c2Wbis}.
As we will see, this quantity jointly with $\alpha$ will be fixed under T-duality.

Throughout this section, we fix a reference hermitian metric on $\mathbb{V}$. Let $P_0$ denote the bundle of split hermitian frames of 
$$
\mathbb{V} \oplus \mathbb{W}
$$
with trivial determinant. Then, $P_0$ is a $G$-bundle with
$$
G = \operatorname{SU}(3) \times \operatorname{SU}(r),
$$
and we consider a fixed bilinear form on its Lie algebra given by 
$$
\langle \cdot , \cdot \rangle = \frac{\alpha}{4}\tr_{\mathfrak{su}(3)} - \frac{\alpha}{4}\tr_{\mathfrak{su}(r)}.
$$
We define the $G$-bundle $P_\kappa = p_\kappa^* P_0$, which corresponds to the bundle of split hermitian frames of 
$$
T^{1,0}X_{\kappa} \oplus p_\kappa^*\mathbb{W}
$$
with trivial determinant, obtaining a diagram of the form \eqref{eq:PMB}, with $B = S$ and $M = X_\kappa$. 

Consider a solution $(\omega_{t,u},p_\kappa^*\nabla,p^*_\kappa A)$ of the Hull-Strominger system \eqref{eq:HS} given by Theorem \ref{thm1}. We will assume that $\nabla$ is unitary with respect to the fixed hermitian metric on $\mathbb{V}$. Taking the product connection $\theta = p_\kappa^*(\nabla \times A)$, we obtain that $(\Omega,\omega_{t,u},\theta)$ is a solution of \eqref{eq:Stromingersystem} on $(X_{\kappa},P_{\kappa})$ with string class
\begin{equation}\label{eq:strclasssol}
\tau_{\kappa,t} = [- \pi^*d^c \omega_{t,u} + CS(\theta)] \in H^3_{str}(P_\kappa,\RR).
\end{equation}
Note that, in order to obtain a solution of \eqref{eq:Stromingersystem} we crucially use the fact that $\nabla$ is a Hermite-Yang-Mills connection with respect to $\omega_{t,u}$.

\begin{theorem}\label{th:TdualtityHSGP}
Assume that $t\kappa \in H^2(S,\ZZ)^{\oplus 2}$. Then, the solution $(\omega_{t,u},p_\kappa^*\nabla,p^*_\kappa A)$ of the Hull-Strominger system \eqref{eq:HS} on $(X_{\kappa},P_{\kappa})$ with string class $\tau_{\kappa,t}$ is T-dual to a solution on $(X_{\kappa'},P_{\kappa'})$ of the form $(\omega_{t',u},p_{\kappa'}^*\nabla,p^*_{\kappa'} A)$ with string class $\tau_{\kappa',t'}$ defined as in \eqref{eq:strclasssol}, where
\begin{equation}\label{eq:kappat}
\kappa' = - t\kappa, \qquad t' = t^{-1}.
\end{equation}
\end{theorem}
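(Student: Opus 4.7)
The plan is to apply Theorem \ref{th:TdualityHS} after exhibiting an explicit T-dual triple $(X_{\kappa'},P_{\kappa'},\tau_{\kappa',t'})$ for $(X_\kappa,P_\kappa,\tau_{\kappa,t})$ in the sense of Definition \ref{def:Tdual}, and then verifying that the resulting solution has the claimed Goldstein-Prokushkin form. Since $t\kappa \in H^2(S,\ZZ)^{\oplus 2}$, the classes $\kappa' = -t\kappa$ are integral, so the principal $T^2$-bundle $p_{\kappa'} \colon X_{\kappa'} \to S$ exists, and we choose on it a connection $\upsilon'$ with curvature $iF_{\upsilon'} = -t(\omega_1,\omega_2)$; the pull-back bundle $P_{\kappa'} = p_{\kappa'}^*P_0$ is then well defined, with the same structure group $G = \operatorname{SU}(3) \times \operatorname{SU}(r)$ and the same pairing $\langle \cdot, \cdot \rangle$.

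Next I would construct the correspondence data. The fibred product $X_\kappa \times_S X_{\kappa'}$ is a principal $T^4$-bundle over $S$ fitting in the natural diagram of Definition \ref{def:Tdual}, with $P \times_{P_0} P' = p_\kappa^*P_0 \times_{P_0} p_{\kappa'}^*P_0$ covering $P_0 \to S$. I would take the two-form $F$ on the correspondence space to be the natural pairing between the two torus connections, namely
\begin{equation*}
F = q^*\upsilon_1 \wedge q'^*\upsilon'_1 + q^*\upsilon_2 \wedge q'^*\upsilon'_2,
\end{equation*}
up to a normalization dictated by the integrality of the lifts; this is manifestly non-degenerate on $\operatorname{Ker} dq \otimes \operatorname{Ker} dq'$. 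The choice $\kappa' = -t\kappa$, $t' = 1/t$ is precisely what is needed for the fibrewise rescaling of the standard T-duality pairing to be consistent.

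The main step is the verification of $dF = q^*\hat H - q'^*\hat H'$ for representatives $\hat H,\hat H'$ of $\tau_{\kappa,t},\tau_{\kappa',t'}$ built out of the explicit data. Here I would split $\theta = p_\kappa^*(\nabla \times A)$, so that the Chern-Simons contribution to $\hat H$ is pulled back from a Chern-Simons form on $P_0$ (using that $\nabla$ and $A$ are both Hermite-Yang-Mills on the base); the same happens on the dual side with the same $(\nabla,A)$ on $P_0$. Consequently these terms cancel after pull-back to $P \times_{P_0} P'$, and the identity \eqref{eq:dF} reduces to a computation on $X_\kappa \times_S X_{\kappa'}$ involving only the torus connections and the $\sigma\wedge\bar\sigma$-type piece of $\omega_{t,u}$, $\omega_{t',u}$. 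A direct calculation of $-d^c\omega_{t,u}$ from \eqref{eq:GPansatz}, using $d\sigma = \omega_1 + i\omega_2$, shows that the fibrewise part transforms under the swap $(\upsilon,t) \leftrightarrow (\upsilon',1/t)$ precisely by an exact term $dF$; the $p^*(e^u\omega_S)$-part is common to both sides of \eqref{eq:dF} and drops out, which is why the same function $u$ on $S$ works on both sides.

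With the triple $(X_{\kappa'},P_{\kappa'},\tau_{\kappa',t'})$ in hand, Theorem \ref{th:TdualityHS} produces a solution of \eqref{eq:Stromingersystem} on $(X_{\kappa'},P_{\kappa'})$ with string class $\tau_{\kappa',t'}$. The explicit formula for the T-dual solution from \cite{GF3}, applied to our ansatz, transports the horizontal $p^*(e^u\omega_S)$-piece isometrically and inverts the fibre metric determined by $t$, yielding exactly $\omega_{t',u}$ with $t' = 1/t$; moreover, because the $G$-connection $\theta$ is pulled back from $P_0$, its T-dual is the analogous pull-back $p_{\kappa'}^*(\nabla\times A)$. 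One verifies consistency a posteriori by checking that the Laplace equation \eqref{eq:Laplace} is invariant under $(\kappa,t) \mapsto (-t\kappa, 1/t)$: the term $t\|\omega_j\|^2_{\omega_S}$ becomes $(1/t)\|t\omega_j\|^2_{\omega_S} = t\|\omega_j\|^2_{\omega_S}$, and the Chern-class condition \eqref{eq:c2Wbis} together with the integrality \eqref{eq:integral} is preserved, so $(\mathbb{W},A)$ and $(\mathbb{V},\nabla)$ may be used unchanged. The main obstacle I expect is the bookkeeping in the third step: matching normalizations so that $F$ is genuinely closed-up-to-$q^*\hat H - q'^*\hat H'$ with the precise Chern-Simons representatives \eqref{eq:hatH}, rather than merely up to a harmonic or globally exact ambiguity on $S$.
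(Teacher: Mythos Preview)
Your proposal is correct and follows essentially the same route as the paper: construct the dual $T^2$-bundle with connection $\upsilon'$, take $F$ to be the pairing $\tfrac{1}{2}(\upsilon \wedge \upsilon')$ (your normalization caveat is exactly this factor), verify \eqref{eq:dF} by computing $d^c\omega_{t,u}$ and noting that the pulled-back Chern--Simons terms and the $e^u\omega_S$-contributions cancel, and then check that the T-duality isomorphism sends the generalized metric of $(\omega_{t,u},\theta)$ to that of $(\omega_{t',u},\theta')$. The paper carries out this last step by working with the exact equivariant Courant algebroids $E_\kappa = TP_\kappa \oplus T^*P_\kappa$ and the Cavalcanti--Gualtieri isomorphism rather than quoting the formula from \cite{GF3} directly, but the content is the same.
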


\begin{proof}
Identify $\operatorname{Lie} T^2 = \mathbb{R}^2$ and denote by $\upsilon'$ the connection on $X_{\kappa'}$ with curvature $(-it\omega_1,-it\omega_2)$. Consider the associated complex one-form $\sigma'$ as in \eqref{eq:sigma}. Since
$$
(\kappa,t) \to (- t\kappa,t^{-1})
$$
leaves the second Chern class \eqref{eq:c2Wbis} of $\mathbb{W}$ invariant, we can consider a solution on $(X_{\kappa'},P_{\kappa'})$ of the form $(\omega_{t',u'},p_{\kappa'}^*\nabla,p^*_{\kappa'} A)$, where
$$
\omega_{t',u'} = p_{\kappa'}^*(e^{u'} \omega_S) + \frac{i t'}{2} \sigma' \wedge \overline{\sigma}',
$$
and $u'$ is a solution of the corresponding Laplace equation \eqref{eq:Laplace}. A moment of thought using \eqref{eq:kappat} shows that we can take $u' = u$.

We claim that $(X_{\kappa},P_{\kappa},\tau_{\kappa,t})$ is T-dual to  $(X_{\kappa'},P_{\kappa'},\tau_{\kappa',t'})$, where $\tau_{\kappa',t'}$ is defined as in \eqref{eq:strclasssol}. To see this, we note that
\begin{align*}
d^c \omega_{t,u} & = d^c u \wedge e^u \omega_S - \frac{t}{2}(\omega_1 + i \omega_2) \wedge \overline{\sigma} - \frac{t}{2} \sigma \wedge \overline{(\omega_1 + i \omega_2)}\\
& = d^c u \wedge e^u \omega_S - \frac{t}{2} ((\omega_1,\omega_2) \wedge \upsilon),
\end{align*}
where $(\cdot,\cdot)$ denotes the standard pairing on $\mathbb{R}^2$. Define the three-form
\begin{equation}\label{eq:hatHbis}
\hat H = - \pi^*d^c \omega_{t,u} + CS(\theta),
\end{equation}
and similarly for $\hat H'$. Then, we have
\begin{align*}
q^*\hat H - q'^*\hat H' & = \frac{t}{2} ((\omega_1,\omega_2) \wedge \upsilon) + \frac{1}{2} ((\omega_1,\omega_2) \wedge  \upsilon')\\
& = \frac{1}{2}d (\upsilon \wedge \upsilon'),
\end{align*}
The claim follows by taking $F = \frac{1}{2}(\upsilon \wedge \upsilon')$ in Definition \ref{def:Tdual}. For simplicity in the notation, we have omitted pull-backs in the previous formulae.

To finish the proof, we need to check that the solution $(\omega_{t,u},p_\kappa^*\nabla,p^*_\kappa A)$ is exchanged with the solution $(\omega_{t',u},p_{\kappa'}^*\nabla,p^*_{\kappa'} A)$ via the \emph{T-duality isomorphism} in \cite[Proposition 4.13]{BarHek}. Rather than using the isomorphism between the complicated transitive Courant algebroids determined by the T-dual string classes \cite{BarHek,GF}, we will use the lore of T-\emph{duality commutes with reduction}, and apply the original and more amenable isomorphism between exact Courant algebroids in \cite{CaGu}. Consider the exact $T^2 \times G$-equivariant Courant algebroid
$$
E_\kappa = TP_\kappa \oplus T^*P_\kappa
$$ 
on $P_\kappa$, determined by the three-form \eqref{eq:hatHbis}. Similarly, we define $E_{\kappa'}$ for the corresponding object on $P_{\kappa'}$. Since $(X_{\kappa},P_{\kappa},\tau_{\kappa,t})$ is T-dual to  $(X_{\kappa'},P_{\kappa'},\tau_{\kappa',t'})$, there exists an isomorphism of reduced Courant algebroids \cite[Theorem 3.1]{CaGu}
$$
\varphi  \colon E_\kappa/T^2 \to E_{\kappa'}/T^2
$$
over $P_0$. Explicitly, this is given by
$$
\psi(v + \xi + w\partial_\upsilon + f \upsilon + a + \langle b ,\theta \rangle ) = v + \xi + f\partial_{\upsilon'} + w \upsilon' + a + \langle b ,\theta' \rangle,
$$
where we use the decomposition
$$
E_{\kappa|x} \cong T_sS \oplus T^*_sS \oplus \operatorname{Ker} dp_{\kappa|x} \oplus (\operatorname{Ker} dp_{\kappa|x})^* \oplus \operatorname{Ker} \pi_{x} \oplus (\operatorname{Ker} \pi_{x})^*
$$
at a point $x \in P_\kappa$, over $s \in S$, induced by the connections $\upsilon$ and $\theta$ (and similarly for $E_{\kappa'}$). 

Denote by $g_{t,u}$ the Riemannian metric on $X_\kappa$ determined by the hermitian form $\omega_{t,u}$. Consider the $T^2$-invariant generalized metric on $E_\kappa$ (with indefinite signature)
$$
V_+ = \{ V + \pi^*g_{t,u}(V) + \langle \theta V, \theta \rangle \} \subset E_\kappa
$$
with $V$ running among vector fields on $P_\kappa$. A straightforward calculation shows that (cf. \cite[Example 4.6]{CaGu})
$$
\varphi(V_+) = \{ V + \pi^*g_{t',u}(V) + \langle \theta' V, \theta' \rangle \} \subset E_{\kappa'}.
$$
On the other hand, we note that the reduction of $V_+$ to the transitive Courant algebroid over $X_\kappa$ determined by $\tau_{\kappa,t}$ is a generalized metric inducing the triple $(g_{t,u},d^c\omega_{t,u},\theta)$ (see \cite[Proposition 3.4]{GF}). Thus, using that T-duality commutes with reduction \cite[Proposition 4.13]{BarHek}, we conclude that $(g_{t,u},d^c\omega_{t,u},\theta)$ is exchanged via T-duality with $(g_{t',u},d^c\omega_{t',u},\theta')$. Note that the $SU(3)$-structure of the initial solution is completely determined by the holonomy bundle of the skew-torsion connection
$$
\nabla^{g_{t,u}} - \frac{1}{2} d^c\omega_{t,u}.
$$
This implies that $(\omega_{t',u},p_{\kappa'}^*\nabla,p^*_{\kappa'} A)$ is T-dual to $(\omega_{t,u},p_\kappa^*\nabla,p^*_\kappa A)$.
\end{proof}

\begin{remark}\label{rem:FuYau}
The key to prove Theorem \ref{th:TdualtityHSGP} is that the differential operator in the left hand side of \eqref{eq:Laplace} (given by the Laplacian on $S$) is independent of the size of the fibres $t$, combined with the invariance of the right hand side of \eqref{eq:Laplace} under the transformation \eqref{eq:kappat}. 
With the Chern connection ansatz for $\nabla$, as considered in \cite{FuYau} (see also \cite{Phong4,PPZ2}), the right hand side in Fu-Yau equation \cite[(3.11)]{FuYau} corresponds essentially to the right hand side of \eqref{eq:Laplace}, but the differential operator on the left hand side of \cite[(3.11)]{FuYau} has a non-trivial quadratic dependence on the size of the fibres (cf. Remark \ref{rem:FuYau0}).
\end{remark}

\begin{remark}\label{rem:dilaton}
The \emph{dilaton shift} in \cite[Proposition 6.8]{GF3} plays no role in the proof of Theorem \ref{th:TdualtityHSGP}, since the density induced by the metric solution on the fibres of $X_\kappa$ is constant along the base.
\end{remark}

The behaviour of the parameters \eqref{eq:kappat} under T-duality shows that the size of the fibres for the original solution is exchanged with the topology of the T-dual. Furthermore, the size of the fibres of the T-dual solution is the inverse of the original one. The exchange of big fibres with small fibres is considered the hallmark of a T-duality transformation. The change in the topology of the bundle $\kappa \to -t\kappa$ is reminiscent of the SYZ mirror symmetry 
\cite{SYZ}, which exchanges dual torus fibrations. 
Note that, unlike in SYZ for Calabi-Yau threefolds where dual $T^3$-fibrations are exchanged, here we have dualized with respect  to a $T^2$-fibration on $X_\kappa$. We thank Fei and the referee for these observations. 

Our construction can be easily modified to find T-duals with more general topologies than the ones provided by \eqref{eq:kappat}. For this, we can regard $X_\kappa$ as a U$(1)$-principal bundle over 
$$
Y_\kappa = X_\kappa/\operatorname{U}(1)
$$
and apply the same argument. This way we obtain that for all
\begin{equation}\label{eq:kappatgen}
\kappa' = (t_1\kappa_1, t_2\kappa_2), \qquad t_1,t_2 \in \mathbb{Q}^*, \qquad \kappa' \in H^2(S,\ZZ)^{\oplus 2}
\end{equation}
there is a solution on $(X_{\kappa'},P_{\kappa'})$ related to the one on $(X_{\kappa},P_{\kappa})$ via a chain of T-dualities. The details of this slightly more general construction are left to the reader. Note here that the first Chern classes $\kappa_1,\kappa_2$ determine the fundamental group of $X_\kappa$ through the long homotopy sequence of the fibration, and hence one can easily choose $\kappa'$ such that $X_{\kappa}$ and $X_{\kappa'}$ have different fundamental group. Tipler has pointed out to the author that the relation \eqref{eq:kappatgen} yields an isomorphic space of complex deformations for $X_\kappa$ and $X_{\kappa'}$ and also isomorphic Aeppli cohomology groups $H^{1,1}_A(X_{\kappa'}) \cong H^{1,1}_A(X_{\kappa})$. In the light of the new developments in \cite{grt2}, this provides a first consistency check that our T-dual solutions correspond to a chain of (0,2)-mirrors. We hope to go back to this question elsewhere.

The T-dual solutions we have found in Theorem \ref{th:TdualtityHSGP} are rather special, since the connections $\nabla$ and $A$ in \eqref{eq:HS} are pull-backs of connections on the base of the torus fibration. This has the effect that the bundles act as mere \emph{spectators} in the T-dualization of the solution. In a more general situation, as explored in the physics literature \cite{EsvlinMina,Israel,MaSp} and independently proposed in \cite{BarHek}, one would expect that the Chern classes of the bundle in the original solution contribute to the topology of the T-dual. It would be interesting to find examples of solutions of the Hull-Strominger system which exhibit this phenomenon. In the light of \cite[Proposition 17]{FuYau}, this does not seem to be possible with the Goldstein-Prokushkin ansatz for the hermitian metric.


\begin{thebibliography}{101}

\bibitem{ABS} A. Adams, A. Basu, and S. Sethi, \emph{$(0,2)$ Duality}, 
Adv. Theor. Math. Phys. {\bf 7} (2004) 865--950.

\bibitem{AGF1} B. Andreas and M. Garcia-Fernandez, \emph{Solutions of the Strominger system via stable bundles on Calabi-Yau threefolds}, Comm. Math. Phys. \textbf{315} (2012), no. 1, 153--168.

\bibitem{AGF2} B. Andreas and M. Garcia-Fernandez, \emph{Heterotic non-K\"ahler geometries via polystable bundles on Calabi-Yau threefolds}, J. Geom. Phys. {\bf 62} (2012), no. 2, 183--188.

\bibitem{BarHek} D.~Baraglia and P. Hekmati, \emph{Transitive Courant Algebroids, String Structures and T-duality}, Adv. Theor. Math. Phys. {\bf 19} (2015) 613--672.

\bibitem{BeSe} K. Becker and S. Sethi, \emph{Torsional heterotic geometries}, Nucl. Phys. B {\bf 820} (2009) 1--31.

\bibitem{BeJO} E. Bergshoeff, B. Janssen, and T. Ort\'in, \emph{Solution--Generating Transformations and the String Effective Action}, Class. Quant. Grav. {\bf 13} (1996) 321--343.

\bibitem{BeRoo} E. Bergshoeff and M. de Roo, \emph{The quartic effective action of the heterotic string and supersymmetry}, Nucl. Phys. B {\bf 328} (1989) 439--468.

\bibitem{BEM}
		P. Bouwknegt, J. Evslin, and V. Mathai,
		\emph{T-duality: topology change from $H$-flux}. Comm. Math. Phys. {\bf 249} (2004) 383--415.

\bibitem{Buscher1}
		T. Buscher,
		\emph{A symmetry of the strong background field equations}, Phys. Lett. B {\bf 194} (1987) 59--62.

\bibitem{CCDLMZ}
		G.L. Cardoso, G. Curio, G. Dall'Agata, D. Lust, P. Manousselis, and G. Zoupanos,
        \emph{Non-K\"ahler string backgrounds and their five torsion classes}, Nucl. Phys. B {\bf 652} (2003), 5--34

\bibitem{CaGu}
		G. Cavalcanti and M. Gualtieri,
        \emph{Generalized complex geometry and T-duality}, In \emph{A Celebration of the Mathematical Legacy of Raoul Bott}, Am. Math. Soc. (CRM Proceedings \& Lecture Notes) (2010) 341--366.

\bibitem{CHZ}
		J. Chu, L. Huang, and X. Zhu,
        \emph{The Fu-Yau equation in higher dimensions}, arXiv:1801.09351.
        
\bibitem{DRS} K. Dasgupta, G. Rajesh, and S. Sethi, \emph{M theory, orientifolds and G-flux}, JHEP {\bf 08} (1999) 023.

\bibitem{OssaSvanes} X. De la Ossa and E. E. Svanes, \emph{Connections, Field Redefinitions and Heterotic Supergravity}, JHEP {\bf 12} (2014) 008.

\bibitem{Don} S.K.~Donaldson, \emph{Anti-self-dual Yang--Mills connections on a complex algebraic surface and stable vector bundles}, Proc. London Math. Soc. {\bf 50} (1985) 1--26.

\bibitem{EsvlinMina} J. Evslin and R. Minasian, \emph{Topology Change from (Heterotic) Narain T-Duality}, Nucl. Phys. B {\bf 820} (2009) 213--236.

\bibitem{Fei0} T. Fei, \emph{A construction of non-K\"ahler Calabi-Yau manifolds and new solutions to the Strominger system}, Adv. Math. {\bf 302} (2016) 529--550.

\bibitem{Fei1} \bysame, \emph{Some Torsional Local Models of Heterotic Strings},  Comm. Anal. Geom. {\bf 25} (2017), no. 5, 941--968.


\bibitem{FHZ} T. Fei, Z.-J. Huang, and S. Picard, \emph{A construction of infinitely many solutions to the Strominger system}, arXiv:1703.10067.


\bibitem{FeiYau} T. Fei and S.-T.~Yau, Invariant Solutions to the Strominger System on Complex Lie Groups and Their Quotients, arXiv:1407.7641 (2014).

\bibitem{FIUVa}
		M. Fern\'andez, S. Ivanov, L. Ugarte, D. Vassilev, \emph{Non-K\"ahler heterotic string solutions with non-zero fluxes and non-constant dilaton}, JHEP {\bf 6} (2014) 73.

\bibitem{FIVU} M. Fern\'andez, S. Ivanov, L. Ugarte, R. Villacampa, \emph{Non-K\"ahler heterotic-string compactifications with non-zero fluxes and constant dilaton}, Commun. Math. Phys. {\bf 288} (2009) 677--697.

\bibitem{FMW} R. Friedman, J. Morgan and E. Witten, \emph{Vector bundles and F theory}, Commun. Math. Phys. {\bf 187} (1997) 679--743.

\bibitem{FTY} J.-X.~Fu, L.-S.~Tseng and S.-T.~Yau, \emph{Local heterotic torsional models}, Comm. Math. Phys. {\bf 289} (2009) 1151--1169.

\bibitem{FuYau2}
   J.-X. Fu and S.-T.~Yau, \emph{A Monge-Amp\`ere type equation motivated by string theory}, Comm. Anal. Geom. {\bf 15} (2007) 29--76.

\bibitem{FuYau}
   J.-X. Fu and S.-T.~Yau, \emph{The theory of superstring with flux on non-K\"ahler manifolds and the complex Monge-Amp\`ere}, J. Diff. Geom. {\bf 78} (2008) 369--428.
   
\bibitem{GGG} A. Gadde, S. Gukov, and P. Putrov, {\em (0,2)-trialities}, JHEP {\bf 1403} (2014) 076.
   
\bibitem{GF} M. Garcia-Fernandez, {\em Torsion-free generalized connections and Heterotic supergravity}, Commun. Math. Phys. {\bf 332} (2014) 89--115.
   

\bibitem{GF3} \bysame, \emph{Ricci flow, Killing spinors, and T-duality in generalized geometry}, Adv. Math. (2019), https://doi.org/10.1016/j.aim.2019.04.038.
   

\bibitem{grt2} M. Garcia-Fernandez, R. Rubio, C. Shahbazi, and C. Tipler, {\it Canonical metrics on holomorphic Courant algebroids}, arXiv:1803.01873.
   
\bibitem{GoPro} E. Goldstein and S. Prokushkin, \emph{Geometric model for complex non-K\"ahler manifolds with $\operatorname{SU}(3)$ structure}, Comm. Math. Phys. {\bf 251} (2004) 65--78.



\bibitem{HeLiYau} J. J. Heckman, H. Lin, S.-T. Yau, \emph{Building Blocks for Generalized Heterotic/F-theory Duality}, Adv. Theor. Math. Phys. {\bf 18} (2014) 1463--1503.

\bibitem{HullTurin} C. Hull, \emph{Superstring compactifications with torsion and space-time supersymmetry}, In Turin 1985 Proceedings ``Superunification and Extra Dimensions'' (1986) 347--375.

\bibitem{Hull2} C. Hull, \emph{Compactifications of the Heterotic Superstring}, Phys. Lett. B {\bf 191} (1986) 357--364.

\bibitem{Israel} D.~Israel, \emph{T-Duality in Gauged Linear Sigma-Models with Torsion}, JHEP {\bf 1007} (2013) 093.

\bibitem{Ivan09} S.~Ivanov, \emph{Heterotic supersymmetry, anomaly cancellation and equations of motion},  Phys. Lett. B {685}  (2010) 190--196.

\bibitem{LiYau} J.~Li and S.-T.~Yau, \emph{The existence of supersymmetric string theory with torsion}, J. Diff. Geom. {\bf 70} (2005) 143--181, arXiv:0411136 [hep-th].

\bibitem{MaSc}
		J.~Maharana and J.~H.~Schwarz,
		\emph{Noncompact symmetries in string theory}, Nucl. Phys. B {\bf 390} (1993) 3--32.

\bibitem{MaSp}
		D.~Martelli and J.~Sparks,
		\emph{Non K\"ahler heterotic rotations}, Adv. Theor. Math. Phys. {\bf 15} (1) (2011) 131--174.


\bibitem{MelPle}
I. V. Melnikov and M. R. Plesser, \emph{A $(0,2)$ mirror map}, JHEP {\bf 1102} (2011) 001.

\bibitem{MelMiSe}
I. Melnikov, R. Minasian and S. Sethi, \emph{Heterotic fluxes and supersymmetry}, JHEP {\bf 1406} (2014) 174.


\bibitem{Narain} K. S. Narain, \emph{New heterotic string theories in uncompactified dimensions $< 10$}, Phys. Lett. B {\bf 169} (1986) 41.

\bibitem{OUVi} A. Otal, L. Ugarte, R. Villacampa, \emph{Invariant solutions to the Strominger system and the heterotic equations of motion}, Nucl. Phys. B {\bf 920} (2017), 442--474.

\bibitem{Phong}
        D.-H. Phong, S. Picard, and X. Zhang,
        \emph{Geometric flows and Strominger systems}, Math. Z. (2017), \texttt{https://doi.org/10.1007/s00209-017-1879-y}.
        
\bibitem{Phong2}
        \bysame,
        \emph{The Fu-Yau equation with negative slope parameter}, Invent. Math. {\bf 209} (2017) 541--576.
        
\bibitem{Phong3}
        \bysame,
        \emph{Anomaly flows}, arXiv:1610.02739, to appear in Comm. Anal. Geom.
        
\bibitem{Phong4}
        \bysame,
        \emph{The Anomaly flow and the Fu-Yau equation}, arXiv:1610.02740, to appear in Annals PDE.
               
\bibitem{PPZ1}
        \bysame,
        \emph{The Anomaly flow on unimodular Lie groups}, arXiv:1705.09763.
                
\bibitem{PPZ2}
        D.-H. Phong, S. Picard, and X. Zhang,
        \emph{Fu-Yau Hessian equations}, arXiv:1801.09842.
        

\bibitem{PeregoToma} A. Perego and M. Toma, \emph{Moduli spaces of bundles over non-projective K3 surfaces},  Kyoto J. Math. {\bf 57} (2017) 107--146.

\bibitem{Redden}
        C. Redden,
        \emph{String structures and canonical $3$-forms}, Pac. J. Math. {\bf 249} (2011) 447--484.
        
\bibitem{RoVer}
		M. Ro\v cek and E. Verlinde, \emph{Duality, quotients, and currents}, Nucl. Phys. B {\bf 373} (1992) 630--646.

\bibitem{Strom} A.~Strominger, \emph{Superstrings with torsion}, Nucl. Phys. B {\bf 274} (1986) 253--284.

\bibitem{SYZ} A.~Strominger, S.-T. Yau, and E. Zaslow, \emph{Mirror symmetry is T-Duality}, Nucl. Phys. B {\bf 479} (1996) 243--259.

\bibitem{UVi} L. Ugarte and R. Villacampa, \emph{Balanced Hermitian geometry on 6-dimensional nilmanifolds}, Forum Math. {\bf 27} (2015) 1025--1070

\bibitem{UY}
        K. K. Uhlenbeck and S.-T. Yau,
        \emph{On the existence of Hermitian--Yang--Mills connections on stable bundles over compact K\"ahler manifolds}, Comm. Pure and Appl. Math. \textbf{39-S} (1986) 257--293; \textbf{42} (1989) 703--707.
        
\bibitem{WittenMS} E. Witten, \emph{Mirror manifolds and topological field theory}, in Essays on Mirror Manifolds, ed. S. T. Yau, International Press, Hong Kong, 1992, 120--158.
        
\bibitem{Witten00} \bysame, \emph{World sheet corrections via D-instantons}, JHEP {\bf 02} (2000) 030.       
        

\bibitem{Yau2005}
S.-T. Yau, \emph{Complex geometry: Its brief history and its future}, Science in China Series A Mathematics {\bf 48} (2005) 47--60.


\end{thebibliography}
\end{document}